\newtheorem{theorem}{Theorem}[section]
\newtheorem{lemma}[theorem]{Lemma}
\theoremstyle{definition}
\newtheorem{definition}[theorem]{Definition}
\newtheorem{remark}[theorem]{Remark}
\newcommand{\PP}{\mathbb P}
\newcommand{\cE}{\mathcal{E}}
\newcommand{\cF}{\mathcal{F}}
\newcommand{\cG}{\mathcal{G}}
\newcommand{\cH}{\mathcal{H}}
\newcommand{\cI}{\mathcal{I}}
\newcommand{\cK}{\mathcal{K}}
\newcommand{\cL}{\mathcal{L}}
\newcommand{\cO}{\mathcal{O}}
\newcommand{\cQ}{\mathcal{Q}}
\newcommand{\cR}{\mathcal{R}}
\newcommand{\cP}{\mathcal{P}}
\newcommand{\cT}{\mathcal{T}}
\begin{document}

\title{Rank 2 ACM bundles on Complete intersection Calabi-Yau threefolds}
\author{Matej Filip}
\address{University of Ljubljana, Faculty of Math.~and Phys., Dept.~of Math.}
\email{matej.filip@student.fmf.uni-lj.si}

\date{\today}

\begin{abstract}

The aim of this paper is to classify indecomposable rank 2 arithmetically Cohen-Macaulay (ACM) bundles on compete intersection Calabi-Yau (CICY) threefolds and prove the existence of some of them. New geometric properties of the curves corresponding to rank 2 ACM bundles (by Serre correspondence) are obtained. These follow from minimal free resolutions of curves in suitably chosen fourfolds (containing Calabi-Yau threefolds as hypersurfaces). Also the existence of an indecomposable vector bundle of higher rank on a CICY threefold of type (2,4) is proved.

\end{abstract}

\keywords{Rank 2 ACM bundles}

\subjclass[2010]{14J60, 14M10, 14J32, 14J30}

\maketitle

\section{Introduction}
Curves and vector bundles on a general threefold $X\subset \PP^n$ have been considered as an important tool for the description of the geometry of $X$.

The existence of ACM bundles is linked to the existence of some 
arithmetically Cohen-Macaulay curves, via Serre corespondence between rank 2 bundles on threefolds and locally complete intersection subcanonical curves (see e.g. \cite{har3} or \cite{mad2}).

We will consider smooth Calabi-Yau threefolds, i.e. smooth 3-dimensional projective varieties with trivial canonical class. In particular, if $X=X_{d_1\cdots d_k}\subset \PP^{3+k}$ is a complete intersection of hypersurfaces of degrees $d_1, ..., d_k$ then $X$ is called a CICY threefold.
By adjunction formula (see e.g. \cite[p. 59]{ful}), we obtain the following CICY threefolds:
\begin{itemize}
\item quintic threefold $X_5$ in $\PP^4$,
\item complete intersection $X_8$ of type (2,4) in $\PP^5,$ 
\item complete intersection $X_9$ of type (3,3) in $\PP^5,$
\item complete intrersection $X_{12}$ of type (2,2,3) in $\PP^6$,
\item complete intersection $X_{16}$ of type (2,2,2,2) in $\PP^7$.  
\end{itemize}
Chiantini and Madonna classify ACM rank 2 bundles on a general quintic threefolds \cite[p. 247]{chi} and prove the existence of some of them. They use the tools of deformation theory from Kley \cite{kle}. Rao and Kumar prove the existence of all ACM bundles from  \cite[p. 247]{chi}, where some of the proofs involve computer calculations.
They proved the existence of bundles using the Pfaffian matrix representation of the quintic threefold, so their result cannot be generalized to other CICY threefolds. A list of indecomposable ACM bundles on CICY threefolds is given in Madonna's paper \cite{mad1}, however we believe it is incorrect (see Theorem \ref{theorem1}).

Recently Knutsen \cite{knu} and Yu \cite{yu} explored the existence of smooth isolated curves on CICY threefolds. Their results and the theory of elliptic and canonical curves (see e.g. Babbage \cite{bab}, Eisenbud \cite{eis}, Noether \cite{noe}, Fisher \cite{fis}) will help us to prove the existence of indecomposable rank 2 bundles on general CICY threefolds.

For the existence of rank 4 indecomposable vector bundles on a quintic threefold see Madonna \cite{mad3}.
  
The main result of this paper is
\begin{theorem}\label{theorem1}
Let $X_r\subset \PP^{3+k}$ (where $k=\left\lfloor \frac{r}{4} \right\rfloor$) be a CICY threefold and let $\cE$ be an indecomposable ACM rank 2 vector bundle on it. Then the normalization of $\cE$ has one of the following Chern classes:
\begin{itemize}
\item $c_1=-2$, $c_2=1$,
\item $c_1=-1$, $c_2=2$,
\item $c_1=0$, $3\leq c_2 \leq 4+k$,
\item $c_1=1$, $4\leq c_2 \leq 6+2k$ and $c_2$ is even,
\item $c_1=2$, $c_2\leq 7+2k+r$
\item $c_1=3$, $c_2=8+2k+2r$
\item $c_1=4$,
\begin{itemize}
\item $c_2=30$ if $r=5$,
\item $c_2=44$ if $r=8$,
\item $c_2=48$ if $r=9$,
\item $c_2=62$ if $r=12$,
\item $c_2=80$ if $r=16$.
\end{itemize}
\end{itemize}

We prove the existence of $\cE$ on a general $X_r$ for $c_1=-2,-1$ and $0$, for all possible $r$ and $c_2$ listed above, except  $c_1=0$, $c_2=3$ on $X_{16}$. There also exists bundles for $r=8$ with $c_1=1$, $c_2=6$ and $c_1=1$, $c_2=10$; for $r=9$ with $c_1=1$, $c_2=6$ and $c_1=1$, $c_2=10$; for $r=12$ with $c_1=1$, $c_2=8$ and $c_1=1, c_2=12$.
\end{theorem}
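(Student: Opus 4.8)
The plan is to translate the whole statement into assertions about curves via the Serre correspondence, and then read off both the numerical table and the existence results from the geometry of those curves. Let $\cE$ denote the normalization of an indecomposable rank 2 ACM bundle on $X=X_r$, with Chern classes $c_1,c_2$ as in the table. A general global section of $\cE$ vanishes along a curve $C$, giving
\[
0\to\cO_X\to\cE\to\cI_{C/X}(c_1)\to 0.
\]
Since $X_r$ is a complete intersection, $\cO_X$ is itself ACM, so $H^1_*(\cO_X)=H^2_*(\cO_X)=0$; the long exact sequence then gives $H^1_*(\cE)\cong H^1_*(\cI_{C/X})$, and the sequence $0\to\cI_{X/\PP^{3+k}}\to\cI_{C/\PP^{3+k}}\to\cI_{C/X}\to 0$ identifies this with $H^1_*(\cI_{C/\PP^{3+k}})$. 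Hence $\cE$ is ACM exactly when $C$ is arithmetically Cohen--Macaulay in $\PP^{3+k}$. As $\omega_X=\cO_X$, the curve is subcanonical with $\omega_C\cong\cO_C(c_1)$, so $2g-2=c_1\deg C$, i.e.\ $g=1+\tfrac12 c_1 c_2$ after identifying $\deg C=c_2$. This one formula forces $c_2$ to be even when $c_1$ is odd and fixes the geometric type in each row: lines and conics for $c_1=-2,-1$; elliptic curves for $c_1=0$; canonically embedded curves of genus $1+\tfrac12 c_2$ for $c_1=1$.

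For the classification I would use that an ACM subcanonical curve is arithmetically Gorenstein: the minimal free resolution of $\cO_C$ over the homogeneous coordinate ring of $\PP^{3+k}$ has length equal to the codimension $2+k$ and is self-dual up to a twist determined by $c_1$. Bounding the admissible Betti tables of such a curve of prescribed degree and genus then yields the table. The lower bounds come from nondegeneracy and connectedness (for instance $c_2\ge 3$ in the elliptic case, since a connected ACM curve of genus $1$ has degree at least $3$); the upper bounds come from a Castelnuovo/Gruson--Peskine type estimate, sharpened by the requirement that $C$ actually lie on a threefold of the prescribed multidegree. For the top value $c_1=4$ the bundle $\cE(-2)$ is self-dual, its resolution is skew-symmetric and $\cE$ is a Pfaffian bundle; this rigidity pins down the single value of $c_2$ in each of the cases $r=5,8,9,12,16$, uniformly replacing the quintic-only Pfaffian arguments of the cited literature.

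The existence half I would prove by building the curve first and recovering the bundle afterwards. Write $Y$ for the fourfold cut out by all but the last defining form of $X_r$, so that $X_r\subset Y$ is a hypersurface. For $c_1=-2,-1,0$ I would exhibit a line, a conic, or an elliptic curve of each degree $3\le c_2\le 4+k$ lying on a general, hence smooth, $X_r$, using the isolated smooth curves of Knutsen and Yu together with the classical theory of elliptic curves; for the listed $c_1=1$ cases I would use canonical curves of genus $4,5,6,7$ supplied by the theory of Babbage, Noether, Fisher and Eisenbud. In each case the arithmetically Cohen--Macaulay property is checked directly from the minimal free resolution of $C$ in $Y$, subcanonicity $\omega_C\cong\cO_C(c_1)$ furnishes the reverse Serre correspondence and hence $\cE$, and indecomposability follows from $h^0(\cE_{\mathrm{norm}}(-1))=0$ together with the connectedness of $C$.

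The main obstacle is of two kinds. On the classification side, converting the qualitative Gorenstein picture into the sharp upper bounds requires controlling the Betti numbers tightly enough to exclude the borderline Hilbert functions, and the self-dual case $c_1=4$ must be handled uniformly across all five threefolds rather than only for the quintic. On the existence side the delicate point is to produce, simultaneously, a curve that is arithmetically Cohen--Macaulay, that lies on a \emph{smooth} $X_r$, and that yields an \emph{indecomposable} bundle. The single exceptional configuration $c_1=0,\ c_2=3$ on $X_{16}$ already shows how tight this is: a genus $1$, degree $3$ curve is a plane cubic, and since every defining form of $X_{16}$ is a quadric while a quadric through a plane cubic must vanish on its spanning plane, such a curve forces $\PP^2\subset X_{16}$ and so raises the Picard number above one; on a general $X_{16}$ no such curve exists. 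For the other four threefolds at least one defining form has degree at least $3$, the plane is not forced into $X_r$, and the construction goes through.
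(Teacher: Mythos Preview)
Your Serre-correspondence setup and the treatment of the $c_1\le 0$ existence cases are essentially the paper's, and your analysis of why $c_1=0,\ c_2=3$ fails on $X_{16}$ is correct and sharper than what the paper writes. But two parts of the proposal diverge from the paper in ways that leave real gaps.

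\textbf{Classification.} You propose to extract the table from the self-dual Gorenstein resolution of $C$ together with ``Castelnuovo/Gruson--Peskine type'' bounds on Betti tables. That is vague at exactly the point where the argument has to become sharp (e.g.\ why $c_2\le 4+k$ for $c_1=0$, why $c_2$ is pinned to a single value for $c_1=3,4$). The paper does something much more direct that you never mention: it uses the explicit Riemann--Roch formula
\[
\chi(\cE)=\tfrac{r}{6}c_1^3-\tfrac{c_1c_2}{2}+\tfrac{c_1}{12}\bigl(12(k+4)-2r\bigr)
\]
together with ACM vanishing and Serre duality $h^3(\cE(n))=h^0(\cE(-n))$. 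For each $c_1$ one twist of $\cE$ has $\chi$ computable as a single $h^0$, and the Serre sequence rewrites that $h^0$ as $1+h^0(\cI_C(m))$ for an explicit $m$; this expresses $c_2$ linearly in $h^0(\cI_C(m))$ and gives the bounds (and, for $c_1=3,4$, the exact value) immediately. Your Betti-table route might be made to work, but as stated it does not reach the numbers, whereas one GRR computation per $c_1$ does.

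\textbf{Existence for $c_1=1$.} Citing ``Babbage, Noether, Fisher, Eisenbud'' tells you what a canonical curve of genus $4,5,6,7$ looks like in $\PP^{g-1}$; it does not place that curve on a \emph{smooth} $X_r$. For the top values $c_2=10$ on $X_8,X_9$ and $c_2=12$ on $X_{12}$ the paper, like you, appeals to Knutsen for a smooth curve on a general $X_r$ and to Noether--Enriques--Petri for ACM. For the remaining values $c_2=6$ on $X_8,X_9$ and $c_2=8$ on $X_{12}$ the paper does \emph{not} construct the curve at all: it takes the already-built bundle $\cE$ with $(c_1,c_2)=(-1,2)$ or $(0,3)$ or $(0,4)$, uses the minimal resolution on the ambient fourfold $Y$ guaranteed by its Theorem~3.1, restricts to $X$ to get a four-term exact sequence
\[
0\to\cE(-d)\to L_1|_X\to L_0|_X\to\cE\to 0,
\]
and reads off the kernel of $L_0|_X\twoheadrightarrow\cE$ as a new indecomposable rank~2 ACM bundle $\cF$ with the desired $c_1=1$ invariants. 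Your proposal lacks any mechanism of this kind; to carry out your direct-construction plan you would still owe a Bertini-type argument that a general $X_r$ contains a $(2,3)$ (resp.\ $(2,2,2)$) canonical curve, which is not in the references you cite.
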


Section 2 includes definitions, notations, the Grothendieck-Riemann-Roch formula for rank 2 bundle on CICY threefolds and states the Serre corespondence.
Our Theorem \ref{izjave} in Section 3 explicitely relates minimal resolutions of arithmetically Gorenstein curves and the corresponding rank 2 bundles. This is a generalization of Faenzi, Chiantini result \cite{chifae} for rank 2 bundles on surfaces.
In Section 4 we classify bundles using case by case analysis. We obtain some intersesting properties of the corresponding curves and some minimal free resolutions which lead to the proof of Theorem \ref{theorem1} in Section 5. We also prove the existence of an indecomposable bundle of higher rank on $X_8$.  

\section{Generalities}

We work over the field of complex numbers. 
A vector bundle on a projective scheme $X$ is a locally free coherent sheaf on $X$. 
We denote by $\cO_X$ the structure sheaf of $X$ and for any vector bundle $\cE$ we write $\cE(n)=\cE\otimes \cO_X(n)$.

Since Picard group of $X_r$ is isomorphic to $\mathbb{Z}$ and second Chern class $c_2(\cE)$ is a multiple of the class of a line, we identify Chern classes $c_1(\cE)$, $c_2(\cE)$ and line bundles with integers.
We write $c_i$ for $c_i(\cE)$.
If $\cE$ is a rank 2 vector bundle on $X_r$, we have 
$$c_1(\cE(n))=c_1(\cE)+2n,$$ 
$$c_2(\cE(n))=c_2(\cE)+rnc_1(\cE)+rn^2.$$

\begin{lemma}
The Grothendieck-Riemann-Roch formula (GRR) for a rank 2 bundle $\cE$ on a CICY threefold $X_r$ is
\begin{equation}\label{gro}
\chi (\cE)=\frac{r}{6}c_1^3-\frac{c_1c_2}{2}+\frac{c_1}{12}(12(k+4)-2r), \mathrm{where}~~ k=\left\lfloor \frac{r}{4} \right\rfloor.
\end{equation} 
\end{lemma}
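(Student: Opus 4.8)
The plan is to apply the Hirzebruch--Riemann--Roch theorem $\chi(\cE)=\int_{X_r}\mathrm{ch}(\cE)\,\mathrm{td}(T_{X_r})$ and to use the Calabi--Yau hypothesis to discard every odd-degree term of the Todd class.

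First I would record the characteristic classes involved. Because $X_r$ is Calabi--Yau we have $c_1(T_{X_r})=0$, so the general threefold Todd class collapses to
\[
\mathrm{td}(T_{X_r})=1+\tfrac1{12}\,c_2(T_{X_r}).
\]
For a rank $2$ bundle with Chern classes $c_1,c_2$ the Chern character is
\[
\mathrm{ch}(\cE)=2+c_1+\tfrac12\bigl(c_1^2-2c_2\bigr)+\tfrac16\bigl(c_1^3-3c_1c_2\bigr).
\]
Multiplying these and retaining only the degree $3$ part (the contribution of $\mathrm{ch}_3\cdot 1$ and of $\mathrm{ch}_1\cdot\tfrac1{12}c_2(T_{X_r})$) gives
\[
\chi(\cE)=\int_{X_r}\Bigl[\tfrac16\bigl(c_1^3-3c_1c_2\bigr)+\tfrac1{12}\,c_1\cdot c_2(T_{X_r})\Bigr].
\]

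Next I would evaluate the intersection numbers. With $H$ the hyperplane class and $\ell$ the class of a line, the identifications $\mathrm{Pic}(X_r)=\ZZ H$ and $c_2(\cE)=c_2\,\ell$ give $\int_{X_r}H^3=\deg X_r=r$ and $\int_{X_r}H\cdot\ell=1$, hence $\int_{X_r}c_1^3=c_1^3 r$ and $\int_{X_r}c_1c_2=c_1c_2$. These already account for the first two summands of \eqref{gro}; everything then hinges on the last coefficient $d:=\int_{X_r}H\cdot c_2(T_{X_r})$.

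The computation of $d$ is the crux (the rest being formal), and I would obtain it in either of two ways. Directly, the Euler sequence gives $c(T_{\PP^{3+k}})=(1+H)^{4+k}$, and the normal bundle sequence with $N_{X_r/\PP^{3+k}}=\bigoplus_{i=1}^k\cO_{X_r}(d_i)$ yields
\[
c(T_{X_r})=\frac{(1+H)^{4+k}}{\prod_{i=1}^k(1+d_iH)};
\]
extracting the degree $2$ term and imposing the Calabi--Yau relation $\sum_i d_i=4+k$ produces $c_2(T_{X_r})=\bigl[\binom{4+k}{2}-e_2\bigr]H^2$ with $e_2=\sum_{i<j}d_id_j$, so that $d=r\bigl[\binom{4+k}{2}-e_2\bigr]$, and a short check over the five partitions of $4$ identifies this with $12(k+4)-2r$. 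Cleaner still, one avoids symmetric functions altogether: applying HRR to $\cO_{X_r}(1)$ gives $\chi(\cO_{X_r}(1))=\tfrac{r}{6}+\tfrac1{12}d$, while the vanishing $H^i(\cO_{X_r}(1))=0$ for $i>0$ --- from arithmetic Cohen--Macaulayness for $i=1,2$ and from Serre duality with $K_{X_r}=\cO_{X_r}$ for $i=3$ --- forces $\chi(\cO_{X_r}(1))=h^0(\cO_{X_r}(1))=4+k$, whence $d=12(k+4)-2r$. Substituting this value of $d$ into the integrand above yields \eqref{gro}.
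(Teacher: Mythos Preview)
Your argument is correct and follows the same overall line as the paper: apply Hirzebruch--Riemann--Roch, use the Calabi--Yau condition $c_1(T_{X_r})=0$ to kill the odd Todd terms, and reduce everything to the single number $d=\int_{X_r}H\cdot c_2(T_{X_r})$. The paper computes $c_2(T_{X_r})$ case by case from the normal bundle sequence (obtaining $10h^2,7h^2,6h^2,5h^2,4h^2$ for $r=5,8,9,12,16$) and then multiplies by $r=\int h^3$; your first method is exactly this, phrased uniformly via $c(T_{X_r})=(1+H)^{4+k}/\prod(1+d_iH)$. Your second method, applying HRR to $\cO_{X_r}(1)$ and reading off $d$ from $\chi(\cO_{X_r}(1))=h^0(\cO_{X_r}(1))=4+k$, is a genuinely different and cleaner sub-argument: it bypasses both the symmetric-function manipulation and the five separate checks, at the modest cost of invoking the ACM property of complete intersections and Serre duality. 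Either way the outcome $d=12(k+4)-2r$ plugs in to give \eqref{gro}.
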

\begin{proof}
Let $h$ denote the generator of the Picard group and let $l$ be the class of a line.  
From the Grothendieck-Riemann-Roch formula (see e.g. page 431 in \cite{har}) we get
$$\deg(\mathrm{ch}(\cE)\mathrm{td}(\cT_X))_3=\frac{1}{6}(c_1^3-3c_1c_2)+\frac{1}{4}d_1(c_1^2-2c_2)+\frac{1}{12}(d_1^2+d_2)c_1+\frac{1}{12}d_1d_2,$$ 
where $c_i=c_i(\cE)$,  $d_i=c_i(\cT_X)$, and $\cT_X$ is the tangent sheaf of $X$.
By the adjunction formula (\cite[pg. 59]{ful}) we have $d_1=0$ and in case $r=5$ we obtain $d_2=10h^2$, in the case $r=8$ we obtain $d_2=7h^2$, in the case $r=9$ we obtain $d_2=6h^2$, in the case $r=12$ we obtain $d_2=5h^2$ and in the case $r=16$ we obtain $d_2=4h^2$. We have $h^2=r\cdot l$ and after identifying the Chern classes with integers we get \eqref{gro}. 
\end{proof}

\begin{definition}
Let $\cI_V$ be the saturated ideal of the closed subscheme $V$ of $\PP^n$. Then $V$ is \emph{arithmetically Cohen-Macaulay} (ACM) if 
$$\dim \mathbb{C}[x_0,...,x_n]/\cI_V=\mathrm{depth}~\mathbb{C}[x_0,...,x_n]/\cI_V.$$
\end{definition}
It holds (see e.g. \cite[Lemma 1.2.3]{mig}) that if $\dim V=r\geq 1,$ then $V$ is ACM if and only if $(M^i)(V)=0$, for $1\leq i\leq r.$ Here $(M^i)(V)$ is the deficiency module of $V$, defined as the i-th cohomology module of the ideal sheaf of $V$: 
$$(M^i)(V)=H^i_*(\cI_V).$$

A locally complete intersection projective variety $V\subset \PP^n$  is \emph{subcanonical} if the canonical sheaf $\omega_V$ is isomorphic to $\cO_V(k)$, for some integer $k$.

If $V\subset \PP^n$ is ACM and the last bundle in the  minimal free resolution of $\cI_V$ is a line bundle, then we call $V$ \emph{arithmetically Gorenstein} (AG). 
Note that an ACM variety $V$ is AG if and only if it is subcanonical. If there exists a variety $X\supseteq V$ such that the last bundle in the  minimal free resolution of $\cI_V$ in $X$ is a line bundle then we say that $V$ is \emph{AG in} $X$.     


A sheaf $\cE$ on a $k$-dimensional projective variety $X$ is \emph{arithmetically Cohen-Macaulay} (ACM) if $h^i(\cE(n))=0$ for $i=1,...,k-1$ and for all $n\in \mathbb{Z}$.

We say that a bundle $\cE$ on $X$ is \emph{normalized} if the number 
$$b(\cE):=\max \{n~|~H^0(X,\cE(-n))\ne 0\}$$ 
is equal to zero. Clearly, the normalization of $\cE$ is $\cE(-b(\cE))$. 

The Serre correspondence between bundles and curves (see e.g. \cite{har3} or \cite{mad2}) is the following: 

\begin{theorem}.
Let $X$ be a smooth 3-dimensional projective variety. A curve $C\subset X$ occurs as the zero-locus of a section of a rank 2 vector bundle $\cE$ on $X$, if $C$ is a local complete intersection and subcanonical.
More precisely, for any fixed invertible sheaf $\cL$ on $X$ with $h^1(\cL^{\vee})=h^2(\cL^{\vee})=0$, there exists a bijection between the following set of data:
\begin{enumerate}
\item the set of triples $(\cE, s, \phi)$, where $s\in H^0(X,\cE)$ and $\phi: \wedge^2\cE \rightarrow \cL$ is an isomorphism,  modulo equivalence relation defined in \cite{har3}.
\item the set of pairs $(C,\cE)$, where $C$ is a locally complete intersection curve in $X$ and $\cL\otimes \omega_X \otimes \cO_C$ and $\omega_C$ are isomorphic.
\end{enumerate} 

A normalized bundle $\cE$ has a section whose zero locus $C$ is a curve and we have an exact sequence 
\begin{equation}\label{exs}
0\longrightarrow \cO_X\longrightarrow \cE \longrightarrow\cI_C(c_1(\cE)),
\end{equation}
where $\cI_C$ is an ideal sheaf of $C$ on $X$.
The curve $C$ is ACM if and only if $\cE$ is ACM and $C$ has degree $c_2(\cE)$. Moreover, if $X$ is a Calabi-Yau threefold, the genus of $C$ is $\frac{c_1(\cE)c_2(\cE)}{2}+1$.
\end{theorem}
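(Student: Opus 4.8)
The plan is to establish the bijection in both directions and then read off the numerical consequences from the resulting exact sequence. For the forward direction (data of type (1) to a curve), given a triple $(\cE,s,\phi)$ I would take $C=Z(s)$, the zero locus of the section. Since $\cE$ has rank $2$ on the threefold $X$, a section whose zero locus has the expected codimension vanishes on a codimension-$2$ subscheme, and the Koszul complex of $s$ yields
$$0 \longrightarrow \cO_X \xrightarrow{\ s\ } \cE \longrightarrow \cI_C \otimes \wedge^2 \cE \longrightarrow 0.$$
Using $\phi\colon \wedge^2\cE \xrightarrow{\sim} \cL$ this becomes (\ref{exs}) with $\cL=\cO_X(c_1)$. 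The normal bundle of $C$ is identified with $\cE|_C$, so $\wedge^2 N_{C/X}=\det(\cE|_C)=\cL\otimes\cO_C$, and adjunction for the lci curve $C$ gives $\omega_C\cong \omega_X\otimes\cL\otimes\cO_C$, which is precisely the condition in datum (2). This shows that $C$ is lci and subcanonical.

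Reversing the construction is where the real work lies. Given an lci curve $C$ with $\omega_C\cong\omega_X\otimes\cL\otimes\cO_C$, I want to realize $\cE$ as a \emph{locally free} extension of $\cI_C\otimes\cL$ by $\cO_X$, and such extensions are classified by $\mathrm{Ext}^1_X(\cI_C\otimes\cL,\cO_X)$, which I would analyze through the local-to-global spectral sequence. Because $C$ has codimension $2$ one has $\mathcal{H}om(\cI_C,\cO_X)=\cO_X$, so $\mathcal{H}om(\cI_C\otimes\cL,\cO_X)=\cL^\vee$; applying $\mathcal{H}om(-,\cL^\vee)$ to $0\to\cI_C\to\cO_X\to\cO_C\to0$ identifies the relevant $\mathcal{E}xt$-sheaf as $\mathcal{E}xt^1(\cI_C\otimes\cL,\cO_X)\cong\mathcal{E}xt^2(\cO_C,\cL^\vee)$. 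Local duality for the codimension-$2$ lci subscheme identifies this with $\omega_C\otimes\omega_X^\vee\otimes\cL^\vee|_C$, which the subcanonical hypothesis forces to be $\cO_C$. The hypotheses $h^1(\cL^\vee)=h^2(\cL^\vee)=0$ then make the edge map $\mathrm{Ext}^1(\cI_C\otimes\cL,\cO_X)\to H^0(\mathcal{E}xt^1(\cI_C\otimes\cL,\cO_X))=H^0(\cO_C)$ an isomorphism, and the distinguished section $1\in H^0(\cO_C)$ produces an extension whose middle term is locally free. Keeping track of the choices of $s$ and $\phi$ matches the equivalence relation of \cite{har3}, giving the claimed bijection. \textbf{The main obstacle is exactly this step}: verifying that the canonical extension class yields a genuinely locally free sheaf, which is where both the subcanonical condition (to make $\mathcal{E}xt^1|_C$ trivial) and the vanishing of $h^i(\cL^\vee)$ (to promote the local section to a global extension) are indispensable.

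Finally, I would extract the numerical statements from (\ref{exs}). Twisting by $\cO_X(n)$ and taking cohomology, and using that $\cO_X$ is ACM on a CICY threefold (so $H^i_*(\cO_X)=0$ for $i=1,2$), the long exact sequence identifies the intermediate cohomology of $\cE(n)$ with that of $\cI_C(c_1+n)$; hence $\cE$ is ACM if and only if $H^1_*(\cI_C)=0$, i.e.\ $C$ is ACM. The class of the zero locus of a section of a rank $2$ bundle equals $c_2(\cE)$, giving $\deg C=c_2$. For the genus, on a Calabi--Yau threefold $\omega_X=\cO_X$, so the subcanonical relation becomes $\omega_C\cong\cO_C(c_1)$; taking degrees yields $2g-2=\deg\omega_C=c_1\deg C=c_1 c_2$, whence $g=\tfrac{c_1 c_2}{2}+1$.
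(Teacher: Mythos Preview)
The paper does not prove this theorem: it is stated as the classical Serre correspondence with references to \cite{har3} and \cite{mad2}, and no argument is supplied in the paper itself. Your outline reproduces exactly the standard proof found in those references---Koszul complex in one direction, the local-to-global $\mathrm{Ext}$ analysis and the Serre--Ferrand locally free extension in the other---so there is nothing in the paper to compare against, and your approach is the expected one.

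Two small points are worth tightening. First, you do not justify the clause ``a normalized bundle $\cE$ has a section whose zero locus $C$ is a curve'': one must argue that if the zero scheme of a nonzero section $s$ had a divisorial component $D$, then $s$ would factor through $\cO_X(D)\hookrightarrow\cE$, giving a nonzero section of $\cE(-m)$ for some $m\ge 1$ (since $\mathrm{Pic}\,X\cong\ZZ$ with ample generator), contradicting $b(\cE)=0$. Second, in the ACM equivalence you only match up $H^1_*$; the vanishing of $H^2_*(\cE)$ does not follow directly from the long exact sequence of \eqref{exs} because $H^3_*(\cO_X)\ne 0$. The cleanest fix is Serre duality on the Calabi--Yau threefold together with $\cE^\vee\cong\cE(-c_1)$, which gives $H^2(\cE(n))^*\cong H^1(\cE(-c_1-n))$, so $H^2_*(\cE)=0$ is automatic once $H^1_*(\cE)=0$. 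With these two additions your argument is complete.
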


\section{Minimal resolution of ACM curves on complete intersection threefolds}

Let $\cE$ be a vector bundle of rank 2 on a complete intersection threefold $X\subset \PP^{3+k}$ of type $(d_1,...,d_k)$. Denote by $Y_i$ the complete intersection 
fourfold of type $(d_1,..,d_{i-1},d_{i+1},...,d_k)$ which contains $X$ and let $a_j$ be the degrees of minimal generators for $\cE$ in $Y_i$. By abuse of notation write $\cE$ for the sheaf $i_{*}\cE$, where $i:X\hookrightarrow Y_i$. 
Clearly, for $k=1$ take $Y_1=\PP^4$.
From the Auslander-Buchsbaum formula we get a resolution 
\begin{equation}\label{eq:1}
0\longrightarrow \cF \longrightarrow \cG \longrightarrow \cE \longrightarrow 0,
\end{equation}   
where $\cG=\bigoplus_{j=1}^k\cO_{Y_i}(-a_j)$ and $\cF$ is an ACM bundle on $Y_i$. 

From now on we will assume that $\cF$ splits.
If $Y_i$ is $\PP^4$ (which is the case when $X$ is a quintic threefold), then by Horrock's criterion $\cF$ splits.  

After dualising the exact sequence \eqref{eq:1}, we get  
\begin{equation}\label{eq:dual}
0\longrightarrow \cH om(\cG,\cO_{Y_i}) \longrightarrow \cH om(\cF,\cO_{Y_i}) \longrightarrow \cE xt^1_{\cO_{Y_i}}(\cE,\cO_{Y_i}) \longrightarrow 0.
\end{equation}
Indeed,  $\cH om_{\cO_{Y_i}}(\cE,\cO_{Y_i})=0$ since the support of $\cE$ is in $X$ and $\cE xt^i(\cF,\cO_{Y_i})=\cE xt^i(\cG,\cO_{Y_i})=0$, for $i>0$ by \cite[Proposition III.6.3]{har} since $\cF$ splits.

The same argument as in \cite{rao2} for a quintic yields that even when $Y_i$ is not equal to $\PP^4$, we have an isomorphism $\cE xt^1_{Y_i}(\cE, Y_i)\cong \cE^{\vee}(d_i)$. Indeed, applying the functor $\cH om(\cE, \cdot)$ on the exact sequence
$$0 \to \cO_{Y_i}\to \cO_{Y_i}(d_i)\to \cO_X(d_i)\to 0,$$ 
we obtain 
\begin{align*}
0&\longrightarrow &\cH om_{\cO_{Y_i}}(\cE,\cO_{Y_i})& \longrightarrow &\cH om_{\cO_{Y_i}}(\cE,\cO_{Y_i}(d_i)) &\longrightarrow &\cH om_{\cO_{Y_i}}(\cE,\cO_X(d_i))& \longrightarrow \\
&\longrightarrow &\cE xt^1_{\cO_{Y_i}}(\cE,\cO_{Y_i})& \stackrel{f}{\longrightarrow} &\cE xt^1_{\cO_{Y_i}}(\cE,\cO_{Y_i}(d_i)) &\longrightarrow &\cE xt^1_{\cO_{Y_i}}(\cE,\cO_X(d_i))& \longrightarrow \cdots
\end{align*} 
where $f$ is multiplication by the defining polynomial of $X$ in $Y_i$.
As above we see that $\cH om_{\cO_{Y_i}}(\cE,\cO_{Y_i})=\cH om_{\cO_{Y_i}}(\cE,\cO_{Y_i}(d_i))=0$. Since $\cE xt^1_{\cO_{Y_i}}(\cE,\cO_{Y_i})$ and $\cE xt^1_{\cO_{Y_i}}(\cE,\cO_{Y_i}(d_i))$ are both supported on $X$, we obtain an isomorphism 
$$\cE xt^1_{\cO_{Y_i}}(\cE, \cO_{Y_i})\cong \cE^{\vee}(d_i)\cong \cE(d_i-c_1).$$
Thus we see that resolutions \eqref{eq:1} and \eqref{eq:dual} twisted by $c_1-d_i$ are equivalent resolutions for $\cE$.
In the case of a quintic threefold Beauville \cite{bea} showed the existence of a resolution of the following type
\begin{equation}\label{quintica}
0 \longrightarrow \cG^{\vee}(c_1-5)\longrightarrow \cG \longrightarrow \cE \longrightarrow 0.
\end{equation}
A similar resolution for CICY threefolds will appear in the following theorem.

\begin{theorem}\label{izjave}
Assume that a complete intersection threefold $X$ contains a subcanonical local complete intersection curve $C$, which is AG in at least one of the $Y_i$. Then the ideal sheaf of $C$ has a minimal resolution

\begin{equation}\label{eq:3}
0\longrightarrow P_2 \longrightarrow P_1 \longrightarrow P_0 \longrightarrow \cI_{C,Y_i} \longrightarrow 0,
\end{equation}
where
$$P_0=\bigoplus_{j=1}^{2b+1}\cO_{Y_i}(-r_j), ~~P_1=\bigoplus_{j=1}^{2b+1}\cO_{Y_i}(r_j-c),~~P_2=\cO_{Y_i}(-c).$$
Here $c=(\sum_{j=1}^{2b+1}r_j)/b$ and $r_j$ are the degrees of minimal generators of $\cI_{C,Y_i}$ and $2b+1$ is the number of these generators.
Assume also that $c_1(\cE)=c-d_i$, where $\cE$ is a normalized rank 2 ACM bundle on $X$, corresponding to the curve $C$.
Then $\cE$ has a minimal resolution  

\begin{equation}\label{eq:res4}
0\longrightarrow L_1 \longrightarrow L_0 \longrightarrow \cE \longrightarrow 0,
\end{equation}
where 
$$L_1=\cO_{Y_i}(c-2d_i)\oplus \left (\bigoplus_{j=1}^{2b+1}\cO_{Y_i}(r_j-d_i)\right ),$$
$$L_0=\cO_{Y_i}\oplus  \left (\bigoplus_{j=1}^{2b+1}\cO_{Y_i}(-r_j+c-d_i)\right ).$$
\end{theorem}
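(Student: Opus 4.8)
The plan is to treat the two displayed resolutions separately: first obtaining \eqref{eq:3} from the structure theory of codimension-$3$ Gorenstein ideals, and then deducing \eqref{eq:res4} from it together with the Serre sequence \eqref{exs} and the self-duality already recorded in \eqref{eq:1} and \eqref{eq:dual}.

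For \eqref{eq:3}: since $C$ is a curve in the fourfold $Y_i$ it has codimension $3$, and the hypothesis that $C$ is AG in $Y_i$ says exactly that $\cI_{C,Y_i}$ is a codimension-$3$ arithmetically Gorenstein ideal. I would invoke the Buchsbaum--Eisenbud structure theorem for such ideals: the minimal free resolution is self-dual, has an odd number $2b+1$ of generators, its middle differential $P_1\to P_0$ is given by an alternating matrix, the generators of $\cI_{C,Y_i}$ are the maximal Pfaffians of that matrix, and the last term is a line bundle $\cO_{Y_i}(-c)$. This is precisely the shape \eqref{eq:3}. The value of $c$ comes from a degree count: a morphism $\cO_{Y_i}(r_p-c)\to\cO_{Y_i}(-r_q)$ is multiplication by a form of degree $c-r_p-r_q$, so the $j$-th maximal Pfaffian (the $j$-th generator, of degree $r_j$) is homogeneous of degree $bc-\sum_{l\ne j}r_l$; equating $r_j=bc-\sum_{l\ne j}r_l$ for all $j$ gives $bc=\sum_l r_l$, i.e. $c=(\sum_j r_j)/b$. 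The only point needing care is that $Y_i$ is Gorenstein but not regular, so I would either cite the version of the structure theorem relative to a Gorenstein base or pass to the polynomial ring of $\PP^{3+k}$ and use the complete intersection presentation of $Y_i$.

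For \eqref{eq:res4}: I first observe that $i_*\cE$ is supported on the hypersurface $X\subset Y_i$, hence is a torsion (codimension-$1$) $\cO_{Y_i}$-module; as $\cE$ is ACM, the Auslander--Buchsbaum formula forces its projective dimension over $\cO_{Y_i}$ to be $1$, so the minimal resolution is of length one, $0\to L_1\to L_0\to\cE\to0$ with $L_0$ and $L_1$ of equal rank. Moreover the isomorphism $\cE xt^1_{Y_i}(\cE,\cO_{Y_i})\cong\cE(d_i-c_1)$ recorded before the theorem shows that \eqref{eq:1} is self-dual up to the twist $c_1-d_i$, i.e. $L_1\cong L_0^{\vee}(c_1-d_i)$. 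Thus it suffices to identify $L_0$, the bundle of minimal generators of $\cE$. Feeding \eqref{exs} into $H^0_*$ and using $H^1_*(\cO_X)=0$ (because $X$ is ACM), I get a short exact sequence of graded $S_{Y_i}$-modules $0\to S_X\to H^0_*(\cE)\to H^0_*(\cI_{C,X}(c_1))\to0$, where $S_X$ is cyclic, generated in degree $0$. Since $\cE$ is normalized, the section giving \eqref{exs} is the unique degree-$0$ generator and contributes the summand $\cO_{Y_i}$; the remaining generators are those of $\cI_{C,X}(c_1)$. From \eqref{eq:3} and the sequence $0\to\cO_{Y_i}(-d_i)\to\cI_{C,Y_i}\to\cI_{C,X}\to0$, the $2b+1$ generators of $\cI_{C,Y_i}$ in degrees $r_j$ descend to minimal generators of $\cI_{C,X}$, giving generators of $\cI_{C,X}(c_1)$ in degrees $r_j-c_1=r_j-c+d_i$. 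Hence $L_0=\cO_{Y_i}\oplus\bigoplus_j\cO_{Y_i}(-r_j+c-d_i)$, and $L_1=L_0^{\vee}(c_1-d_i)$ becomes, after substituting $c_1=c-d_i$, exactly the $L_1$ of \eqref{eq:res4}.

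The main obstacle is the generator count in the last step, that is, verifying minimality. Two things must be checked: that normalization (together with $c_1=c-d_i$) forces $r_j-c_1>0$ for all $j$, so that the section is the only degree-$0$ generator and none of the $\cI_{C,X}(c_1)$-generators collide with it; and that all $2b+1$ generators of $\cI_{C,Y_i}$ survive restriction to $X$, equivalently that the defining form $f$ of $X$ (of degree $d_i$) is not itself a minimal generator, i.e. $d_i\notin\{r_j\}$, so that no cancellation occurs when passing from $\cI_{C,Y_i}$ to $\cI_{C,X}$. Equivalently, in a mapping-cone/horseshoe construction of the resolution of $\cE$ from the resolutions of $\cO_X$ and $\cI_{C,X}(c_1)$, the single cancellation that reduces the length-$2$ cone to the length-$1$ resolution is exactly the top syzygy $\cO_{Y_i}(c_1-c)=\cO_{Y_i}(-d_i)$ against the Koszul relation $f\cdot 1=0$ of $\cO_X$; this cancellation is in any case forced by the a priori length-one conclusion above, so the real content is pinning down that it removes precisely these two $\cO_{Y_i}(-d_i)$ summands and nothing more.
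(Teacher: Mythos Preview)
Your derivation of \eqref{eq:3} via the Buchsbaum--Eisenbud structure theorem is exactly what the paper does, and your degree count for $c$ is a welcome addition that the paper omits.

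For \eqref{eq:res4} you take a genuinely different route from the paper. The paper argues by direct construction: it breaks \eqref{eq:3} into two short exact sequences, combines them with the hypersurface sequence $0\to\cO_Y(-d)\to\cO_Y\to\cO_X\to0$, its analogue for $\cI_C$, and the twisted Serre sequence, and then applies the snake lemma three times (plus one lifting step using $\mathrm{Ext}^1(\cP_0,\cO_X(d-c))=0$) to produce the resolution
\[
0\longrightarrow \cP_1\oplus\cO_Y(-d)\longrightarrow \cP_0\oplus\cO_Y(d-c)\longrightarrow \cE(d-c)\longrightarrow 0,
\]
which is \eqref{eq:res4} after a twist. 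Your argument instead exploits two structural facts already recorded before the theorem: Auslander--Buchsbaum forces the minimal graded resolution over $S_Y$ to have length one, and the isomorphism $\cE xt^1_{\cO_Y}(\cE,\cO_Y)\cong\cE(d-c_1)$ forces that resolution to be self-dual up to twist, so that $L_1\cong L_0^{\vee}(c_1-d)$; you then read off $L_0$ from the generator degrees of $H^0_*(\cE)$ via the Serre sequence and the restriction $\cI_{C,Y}\twoheadrightarrow\cI_{C,X}$. This is more conceptual and makes the symmetry of \eqref{eq:res4} manifest from the start, whereas the paper's diagram chase is more explicit but hides why the answer has the self-dual shape \eqref{eq:pomembno}. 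Both approaches leave the minimality verification as a final check; the paper simply asserts it ``can be easily verified'', while you correctly isolate the two points at stake (no generator of $\cI_{C,Y}$ is lost when restricting to $X$, and the degree-$0$ section does not collide with the ideal generators). Note that your phrasing ``$d_i\notin\{r_j\}$'' is slightly too strong: what is needed is that the defining form $f$ is not itself one of the chosen minimal generators, which can fail to be automatic only when some $r_j=d_i$, but even then one can choose the generating set so that $f$ is expressed in terms of the others---this is the same ambiguity the paper's ``easily verified'' is sweeping under.
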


From now on, we fix the fourfold $Y_i$ satisfying the assumptions of Theorem \ref{izjave} and denote it by $Y$. We also denote the degree of the defining polynomial of $X$ in $Y$ by $d$ instead of $d_i$. Observe that \eqref{eq:res4} is of the following type 
\begin{equation}\label{eq:pomembno}
0 \to \cL_0^{\vee}(c-2d)\to \cL_0 \to \cE \to 0.
\end{equation}

\begin{proof}[Proof of Theorem \ref{izjave}]
By Eisenbud and Buchsbaum \cite{buceis} every ideal sheaf of an AG curve in $Y$ has a resolution of type \eqref{eq:3}.
From \eqref{eq:3} we obtain two short exact sequences:
\begin{equation}\label{1}
0\longrightarrow \cO_Y(-c) \longrightarrow \cP_1 \longrightarrow \cK \longrightarrow 0,
\end{equation}
\begin{equation}\label{2}
0\longrightarrow \cK \longrightarrow \cP_0 \longrightarrow \cI_{C,Y} \longrightarrow 0.
\end{equation}
We also have the following exact sequences:
\begin{equation}\label{3}
0\longrightarrow \cO_Y(-d) \longrightarrow \cO_Y \longrightarrow \cO_X \longrightarrow 0,
\end{equation}
\begin{equation}\label{4}
0\longrightarrow \cO_Y(-d) \longrightarrow \cI_{C,Y} \longrightarrow \cI_{C,X} \longrightarrow 0,
\end{equation}
\begin{equation}\label{5}
0\longrightarrow \cO_X(d-c) \longrightarrow \cE(d-c) \longrightarrow \cI_{C,X} \longrightarrow 0.
\end{equation}

Let $\cQ$ be the kernel of the surjective map $\cP_0\rightarrow \cI_{C,X}$. Thus we have
\begin{equation}\label{6}
0\longrightarrow \cQ \longrightarrow \cP_0 \longrightarrow \cI_{C,X} \longrightarrow 0.
\end{equation}  
By the snake lemma applied to \eqref{2} and \eqref{6}, $\cQ$ fits into 
\begin{equation}\label{7}
0\longrightarrow \cK \longrightarrow \cQ \longrightarrow \cO(-d) \longrightarrow 0.
\end{equation} 
Next, apply $\mathrm{Hom}(\cP_0,\cdot)$ to \eqref{5}.
Since $\mathrm{Ext}^1(\cP_0,\cO_X(d-c))=0$, the map $\cP_0\to \cI_{C,X}$ lifts to the map $\cP_0 \to \cE(d-c)$ and thus we can connect \eqref{5} and \eqref{6}.
The mapping cone determines the surjectivity of $\cP_0\oplus \cO_X(d-c)\rightarrow \cE(d-c)$. Thus $\cP_0\oplus \cO_Y(d-c)\rightarrow \cE(d-c)$ is also surjective and we have
\begin{equation}\label{8}
0\to \cR \to \cP_0\oplus \cO_Y(d-c) \to \cE(d-c) \to 0,
\end{equation}
where $\cR$ denotes the kernel.

From the snake lemma applied to \eqref{8} and \eqref{3} twisted by $d-c$, we obtain
\begin{equation}\label{9}
0 \to \cO_Y(-c) \to \cR \to \cQ \to 0.
\end{equation}
Again apply the snake lemma to \eqref{1} and \eqref{9} to get 
$$0 \to \cP_1 \to \cR \to \cO_Y(-d) \to 0,$$
which splits since $\mathrm{Ext}^1(\cO(-d), \cP_1)=0$. 
Thus we proved
\begin{equation}\label{10}
0 \to \cP_1\oplus \cO_Y(-d)\to \cP_0\oplus \cO_Y(d-c) \to \cE(d-c) \to 0.
\end{equation}
Observe, that twisting \eqref{10} by $c-d$ gives \eqref{eq:res4}.
It can be easily verified that the obtained resolution is minimal.
\end{proof}

\begin{remark}\label{indija}
In \cite[Lemma 2, Lemma 5, Proposition 2]{bis} some minimal resolution of ACM rank 2 bundles on cubic and quartic threefolds are determined. If we apply Theorem \ref{izjave} on these cases we obtain the same resolutions except in the case of \cite[Lemma 5]{bis}, where $X$ is a quartic threefold and the curve $C$ corresponding to the bundle $\cE$ with $c_1=2$, $c_2=8$ is of type (2,2,2). We have a minimal resolution 
$$0\longrightarrow \cO_{\PP^4}(-6) \longrightarrow \cO_{\PP^4}(-4)^3 \longrightarrow \cO_{\PP^4}(-2)^3  \longrightarrow \cI_C \longrightarrow 0$$
and by Theorem \ref{izjave} ($c=6$, $d_i=4$, $c_1=c-d_i$, $r_j=2$, for $j=1,..., 3$) we get 
$$0\longrightarrow \cO_{\PP^4}(-2)^4 \longrightarrow \cO_{\PP^4}^4 \longrightarrow \cE \longrightarrow 0.$$

However, in \cite{bis} the obtained resolution of $\cE$ is of the form 
$$0\longrightarrow \cO_{\PP^4}(-2)^4\oplus \cO_{\PP^4}(-1)^k \longrightarrow \cO_{\PP^4}^4\oplus \cO_{\PP^4}(-1)^k \longrightarrow \cE \longrightarrow 0,$$
where $k\in \{0,2,4\}$. It is then deduced that $k=0$ implies $\cE$ is 0-regular, so $k\ne 0$. We believe this is incorrect, since $\cE$ in this case is 1-regular. Theorem \ref{izjave} immediately implies that the case $k=0$ is the correct choice.
\end{remark}

\section{Classification and existence of ACM curves on CICY threefolds}

By \cite[Theorem 3.9]{mad2} a normalized rank 2 ACM bundle on a CICY threefold splits unless $-5<-c_1(\cE)<3$.
Recall from the Serre correspondence that if $\cE$ is an indecomposable ACM rank 2 bundle, then the corresponding curve is ACM of degree $c_2(\cE)$ and genus $c_1(\cE)c_2(\cE)/2+1$.  
In this section we will classify the indecomposable rank 2 ACM bundles on CICY threefolds.

\subsection{Quintic threefold}
In this subsection we write $\cO$ for $\cO_{\PP^4}$. 
For a quintic threefold $X$ in $\PP^4$ the minimal resolution of a curve can be determined from the minimal resolution of the corresponding bundle:
\begin{theorem}\label{izjave2}
Let $\cE$ be a normalized indecomposable rank 2 vector bundle on $X$ with a minimal resolution 
\begin{equation}\label{nor}
0\longrightarrow \cO(c_1(\cE)-5)\oplus \left (\bigoplus_{i=1}^{2b+1} \cO(r_i+c_1(\cE)-5)\right ) \longrightarrow \cO\oplus \left (\bigoplus_{i=1}^{2b+1}\cO(-r_i)\right ) \longrightarrow \cE \longrightarrow 0.
\end{equation}
Then the corresponding curve has a minimal resolution 
$$0\longrightarrow \cO(-c_1(\cE)+5) \longrightarrow \bigoplus_{i=1}^{2b+1}\cO(r_i-5) \longrightarrow  \bigoplus_{i=1}^{2b+1}\cO(-r_i-c_1(\cE))  \longrightarrow \cI_C \longrightarrow 0.$$
\end{theorem}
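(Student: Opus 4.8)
The plan is to invoke Theorem~\ref{izjave} in reverse: that theorem produces the bundle resolution \eqref{eq:res4} from the curve resolution \eqref{eq:3}, and here I want to start from the bundle resolution \eqref{nor} and recover the curve resolution. Since the quintic threefold is the case $k=1$, $Y_i=\PP^4$, and $d_i=5$, the hypotheses of Theorem~\ref{izjave} are automatically satisfied: $\cF$ splits by Horrocks' criterion (as noted in the excerpt), so every normalized indecomposable rank 2 ACM bundle fits into a resolution of the shape \eqref{eq:pomembno}. Matching \eqref{nor} against \eqref{eq:res4} with $d_i=5$, I read off the dictionary $c-2d_i=c_1(\cE)-5$, hence $c=c_1(\cE)+5$, and I check that the summands $\cO(r_i+c_1(\cE)-5)=\cO(r_i-d_i)$ in $L_1$ force the identification $r_i \mapsto r_i + c_1(\cE)$, i.e.\ the degrees $r_j$ of Theorem~\ref{izjave} correspond to $r_i+c_1(\cE)$ here. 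The first step, then, is to carefully rename the generator degrees so that the given resolution \eqref{nor} is exactly the one Theorem~\ref{izjave} attaches to a curve with generator degrees $r_i+c_1(\cE)$ and with $c=c_1(\cE)+5$.

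Once the dictionary is fixed, the second step is to write down the curve resolution \eqref{eq:3} of Theorem~\ref{izjave} under this substitution and confirm it matches the claimed resolution of $\cI_C$. With generator degrees $\tilde r_j := r_j + c_1(\cE)$ and self-linking degree $c=c_1(\cE)+5$, the terms become $P_0=\bigoplus \cO(-\tilde r_j)=\bigoplus \cO(-r_j-c_1(\cE))$, then $P_1=\bigoplus \cO(\tilde r_j - c)=\bigoplus \cO(r_j+c_1(\cE)-c_1(\cE)-5)=\bigoplus \cO(r_j-5)$, and finally $P_2=\cO(-c)=\cO(-c_1(\cE)-5)$. This already reproduces the target resolution
\begin{equation*}
0\longrightarrow \cO(-c_1(\cE)-5) \longrightarrow \bigoplus_{i=1}^{2b+1}\cO(r_i-5) \longrightarrow \bigoplus_{i=1}^{2b+1}\cO(-r_i-c_1(\cE)) \longrightarrow \cI_C \longrightarrow 0,
\end{equation*}
so the main remaining point is to reconcile the leftmost twist: the statement writes $\cO(-c_1(\cE)+5)$ where the self-dual Gorenstein structure predicts $\cO(-c_1(\cE)-5)$, and I would verify which sign is forced by the arithmetically Gorenstein symmetry $c=(\sum \tilde r_j)/b$ of Theorem~\ref{izjave}.

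The cleanest logical route is therefore to show that the bundle resolution \eqref{nor} and the curve resolution are two faces of the single construction in Theorem~\ref{izjave}, rather than rebuild the homological algebra from scratch. Concretely, I would argue that a normalized indecomposable rank 2 ACM bundle $\cE$ on the quintic, being ACM, corresponds via the Serre correspondence to an ACM (hence, being subcanonical, arithmetically Gorenstein) curve $C$ that is AG in $\PP^4$; Theorem~\ref{izjave} then applies and simultaneously pins down both resolutions once the single integer $c$ is determined by the relation $c_1(\cE)=c-d_i=c-5$. The identification of the generator degrees $r_j$ on the curve side with the twists appearing in \eqref{nor} is purely formal bookkeeping driven by this one equation.

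The step I expect to be the genuine obstacle is the consistency of the Gorenstein self-duality constraint $c=(\sum_{j} \tilde r_j)/b$ with the arbitrary-looking degrees $r_i$ in the hypothesis \eqref{nor}: Theorem~\ref{izjave} presupposes that the curve is AG, which imposes the symmetry $\tilde r_j + (\text{dual degree}) = c$ pairing the generators of $P_0$ with those of $P_1$. I would need to check that the shape of \eqref{nor}, namely the pairing of $\cO(-r_i)$ in $L_0$ with $\cO(r_i+c_1(\cE)-5)$ in $L_1$ together with the splitting-off summand $\cO\oplus\cO(c_1(\cE)-5)$, is exactly the numerical fingerprint of this self-duality, so that the hypotheses of Theorem~\ref{izjave} are not only satisfiable but forced. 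In other words, the subtle part is not the algebra of the mapping cone (that is already done in Theorem~\ref{izjave}) but verifying that every $\cE$ of the form \eqref{nor} really does arise from an AG curve with the matching generator degrees, so that the backward direction of the correspondence is legitimate.
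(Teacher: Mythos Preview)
Your approach is correct and is essentially the paper's own: the paper's ``proof'' consists of the single remark that the argument is parallel to \cite[Theorem~2.1]{chifae} together with the identification $c_1(\cE)=c-5$, and Theorem~\ref{izjave} is precisely the threefold analogue of that Chiantini--Faenzi result. Your plan of reading Theorem~\ref{izjave} backwards via uniqueness of minimal free resolutions is exactly the intended mechanism.

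Two comments. First, the sign discrepancy you flagged is real: the examples worked out immediately after Theorem~\ref{izjave2} (e.g.\ $c_1=-2$ gives $\cO(-3)$, $c_1=2$ gives $\cO(-7)$, $c_1=4$ gives $\cO(-9)$) all confirm that the last module is $\cO(-c_1(\cE)-5)$, not $\cO(-c_1(\cE)+5)$; the statement contains a typo and your computation $P_2=\cO(-c)=\cO(-c_1(\cE)-5)$ is the correct one. Second, the ``genuine obstacle'' you isolate is not an obstacle at all. Starting from $\cE$, the Serre correspondence produces an ACM subcanonical curve $C\subset\PP^4$; any such curve is AG in $\PP^4$, so Buchsbaum--Eisenbud furnishes a resolution of type~\eqref{eq:3} with \emph{some} parameters $\tilde r_j,c$. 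The hypothesis $c_1(\cE)=c-d_i$ of Theorem~\ref{izjave} is then automatic, because the Gorenstein symmetry gives $\omega_C\cong\cO_C(c-5)$ while Serre correspondence on the Calabi--Yau gives $\omega_C\cong\cO_C(c_1(\cE))$. Theorem~\ref{izjave} now outputs a minimal resolution of $\cE$, and comparing it termwise with the given~\eqref{nor} (minimal resolutions being unique) forces $\tilde r_j=r_j+c_1(\cE)$, which is your dictionary. No further verification of the self-duality constraint is needed: it is encoded in the shape of~\eqref{nor} and recovered automatically by this matching.
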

\proof
The proof is similar to the proof of \cite[Theorem 2.1]{chifae} so we omit it (note that $c_1(\cE)=c-5$, where $c$ is the integer from Theorem \ref{theorem1}).
\endproof
The classification of indecomposable ACM rank 2 bundles on a quintic can be found in \cite{chi}. In particular, the lower bound for $c_2$ is also given: $11\leq c_2 \leq 14$.
Theorem \ref{izjave2} allows us to determine geometric properties of the curves that correspond to these bundles. Many of these properties are already established in \cite{chi} and \cite{rao}. Here we only desribe the new ones.

Let $\cE$ be a bundle with $c_1=2$, $c_2=11$. From GRR we have $h^0(\cE)=4$, $h^0(\cE(1))=18$, $h^0(\cE(2))=52$ and because a minimal resolution of $\cE$ is of type \eqref{quintica}, we have a minimal resolution 
$$0\longrightarrow \mathcal{O}(-1)^2\oplus \cO(-3)^4 \longrightarrow \cO(-2)^2 \oplus \cO^4 \longrightarrow \cE \longrightarrow 0.$$
From Theorem \ref{izjave2} the minimal resolution of the coresponding curve $C$ is 
$$0\longrightarrow \cO(-7) \longrightarrow \mathcal{O}(-5)^3\oplus \cO(-3)^2 \longrightarrow \cO(-2)^3 \oplus \cO(-4)^2 \longrightarrow \cI_C \longrightarrow 0$$ 
where the degree matrix (see \cite{her}) of $C$ is 

$$
\left [
\begin{array}{ccccc}
3&3&3&1&1 \\
3&3&3&1&1 \\
3&3&3&1&1 \\
1&1&1&0&0 \\
1&1&1&0&0
\end{array}
\right ]
.
$$
By \cite[Theorem 1.2]{her} $C$ is singular.

In the case of $c_1$ even, the minimal resolution of $\cE$ can be uniquely determined as above. 
When $c_1$ is odd, the rank of the direct summands in a minimal resolution of $\cE$ cannot be determined, similar to the problem in Remark \ref{indija}.
For example, without taking into account the geometry of the corresponding curve, an ACM bundle $\cE$ with 
$c_1=-1$ and $c_2=2$ has a minimal resolution
$$0\longrightarrow \cO(-6)\oplus \cO^2(-4)\oplus \cO(-3)^j \longrightarrow \cO \oplus \cO(-2)^2\oplus \cO(-3)^j \longrightarrow \cE \longrightarrow 0.$$
Since the corresponding curve is a conic, Theorem \ref{izjave} implies that $j=1$.

In the sequel we collect minimal resolutions of all possible indecomposable normalized ACM bundles $\cE$ of rank 2. 
By the above methods all these resolutions are uniquely determined, except for $c_1=3$. 
We have:

\begin{itemize}
\item  $ c_1=-2$, $ c_2=1$
$$0\longrightarrow \cO(-3)\longrightarrow \cO(-2)^3 \longrightarrow \cO(-1)^3 \longrightarrow \cI_C \longrightarrow 0,$$
$$0\longrightarrow \cO(-7)\oplus \cO(-4)^3 \longrightarrow \cO \oplus \cO(-3)^3 \longrightarrow \cE \longrightarrow 0,$$

\item $c_1=-1$, $c_2=2$
$$0\longrightarrow \cO(-4)\longrightarrow \cO(-2)\oplus \cO(-3)^2 \longrightarrow \cO(-2) \oplus \cO(-1)^2 \longrightarrow \cI_C \longrightarrow 0,$$
$$0\longrightarrow \cO(-6)\oplus \cO^2(-4)\oplus \cO(-3)^1 \longrightarrow \cO \oplus \cO(-2)^2\oplus \cO(-3)^1 \longrightarrow \cE \longrightarrow 0,$$

\item $c_1=0$, $c_2=3$
$$0\longrightarrow \cO(-5)\longrightarrow \cO(-4)^2\oplus \cO(-2) \longrightarrow \cO(-1)^2 \oplus \cO(-3) \longrightarrow \cI_C \longrightarrow 0,$$
$$0\longrightarrow \mathcal{O}(-5)\oplus \cO(-4)^2 \oplus \cO(-2) \longrightarrow \cO \oplus \cO(-1)^2\oplus \cO(-3) \longrightarrow \cE \longrightarrow 0.$$

\item $c_1=0$, $c_2=4$
$$0\longrightarrow \cO(-5)\longrightarrow \cO(-4)\oplus \cO(-3)^2 \longrightarrow \cO(-1) \oplus \cO(-2)^2 \longrightarrow \cI_C \longrightarrow 0,$$ 
$$0\longrightarrow \mathcal{O}(-3)^2\oplus \cO(-5) \oplus \cO(-4) \longrightarrow \cO \oplus \cO(-1)\oplus \cO(-2)^2 \longrightarrow \cE \longrightarrow 0.$$

\item $c_1=0$, $c_2=5$
\begin{equation}\label{g1d5}
0\longrightarrow \cO(-5)\longrightarrow \cO(-3)^5 \longrightarrow \cO(-2)^5 \longrightarrow \cI_C \longrightarrow 0,
\end{equation}
$$0\longrightarrow \mathcal{O}(-5)\oplus \cO(-3)^5 \longrightarrow \cO \oplus \cO(-2)^5 \longrightarrow \cE \longrightarrow 0.$$

\item $c_1=1$, $c_2=4$
$$0\longrightarrow \cO(-6)\longrightarrow \cO(-5)^2\oplus \cO(-2) \longrightarrow \cO(-1)^2 \oplus \cO(-4) \longrightarrow \cI_C \longrightarrow 0,$$
$$0\longrightarrow \mathcal{O}(-4)^3\oplus \cO(-1) \longrightarrow \cO^3 \oplus \cO(-3) \longrightarrow \cE \longrightarrow 0,$$ 

\item $c_1=1$, $c_2=6$
$$0\longrightarrow \cO(-6)\longrightarrow \cO(-5)\oplus \cO(-4)\oplus \cO(-3) \longrightarrow \cO(-1) \oplus \cO(-2)\oplus \cO(-3) \longrightarrow \cI_C \longrightarrow 0,$$
$$0\longrightarrow \mathcal{O}(-4)^2\oplus \cO(-3)\oplus \cO(-2) \longrightarrow \cO^2 \oplus \cO(-1)\oplus \cO(-2) \longrightarrow \cE \longrightarrow 0,$$

\item $c_1=1$, $c_2=8$
$$0\longrightarrow \cO(-6)\longrightarrow \cO(-4)^3 \longrightarrow \cO(-2)^3 \longrightarrow \cI_C \longrightarrow 0,$$
$$0\longrightarrow \mathcal{O}(-3)^3\oplus \cO(-4) \longrightarrow \cO \oplus \cO(-1)^3 \longrightarrow \cE \longrightarrow 0$$

\item $c_1=2$, $c_2=11$
$$0\longrightarrow \cO(-7) \longrightarrow \mathcal{O}(-5)^3\oplus \cO(-3)^2 \longrightarrow \cO(-2)^3 \oplus \cO(-4)^2 \longrightarrow \cI_C \longrightarrow 0,$$ 
$$0\longrightarrow \mathcal{O}(-1)^2\oplus \cO(-3)^4 \longrightarrow \cO(-2)^2 \oplus \cO^4 \longrightarrow \cE \longrightarrow 0,$$

\item $c_1=2$, $c_2=12$
$$0\longrightarrow \cO(-7)\longrightarrow \cO(-5)^2\oplus \cO(-4) \longrightarrow \cO(-2)^2 \oplus \cO(-3) \longrightarrow \cI_C\longrightarrow 0,$$
$$0\longrightarrow \mathcal{O}(-3)^3\oplus \cO(-2) \longrightarrow \cO^3 \oplus \cO(-1) \longrightarrow \cE \longrightarrow 0,$$

\item $c_1=2$, $c_2=13$
$$0\longrightarrow \cO(-7)\longrightarrow \cO(-5)\oplus \cO(-4)^4 \longrightarrow \cO(-2) \oplus \cO(-3)^4 \longrightarrow \cI_C \longrightarrow 0,$$
$$0\longrightarrow \mathcal{O}(-3)^2\oplus \cO(-2)^4 \longrightarrow \cO^2 \oplus \cO(-1)^4 \longrightarrow \cE \longrightarrow 0,$$

\item $c_1=2$, $c_2=14$
$$0\longrightarrow \cO(-7)\longrightarrow \cO(-4)^7 \longrightarrow \cO(-3)^7 \longrightarrow \cI_C \longrightarrow 0,$$
$$0\longrightarrow \mathcal{O}(-3)\oplus \cO(-2)^7 \longrightarrow \cO^2 \oplus \cO(-1)^7 \longrightarrow \cE \longrightarrow 0.$$

\item $c_1=3$, $c_2=20$
$$0\longrightarrow \cO(-8)\longrightarrow \cO(-5)^4\oplus \cO(-4)^k \longrightarrow \cO(-3)^4 \oplus \cO(-4)^k \longrightarrow \cI_C \longrightarrow 0,$$
$$0\longrightarrow \mathcal{O}(-2)^5\oplus \cO(-1)^k \longrightarrow \cO^5 \oplus \cO(-1)^k \longrightarrow \cE \longrightarrow 0,$$
where $k$ is odd.

\item $c_1=4$, $c_2=30$
$$0\longrightarrow \cO(-9)\longrightarrow \cO(-5)^9 \longrightarrow \cO(-4)^9 \longrightarrow \cI_C \longrightarrow 0,$$
$$0\longrightarrow \mathcal{O}(-1)^{10} \longrightarrow \cO^{10} \longrightarrow \cE \longrightarrow 0.$$

\end{itemize}

\subsection{CICY of type (2,4)}

Let $X_8$ be a general CICY threefold of type (2,4).
The only attempt of classification of indecomposable rank 2 bundles known to the autor can be found in \cite{mad1}.

\begin{itemize}

\item $c_1=-2$

Bundle $\cE$ has a section whose 0-locus $C$ is a curve. From the exact sequence 
$$0\longrightarrow \cO_{X_8}\longrightarrow \cE \longrightarrow \cI_C(-2) \longrightarrow 0,$$
we obtain $h^3(\cE)=h^0(\cE(2))=20$, so $\chi(\cE)=-19$. From GRR we have $\chi(\cE)=-20+c_2$, so $c_2=1$ and the corresponding curve is a line.

Since a line exists on $X_8$ (see e.g. \cite{knu}), then by Serre correspondence a bundle with $c_1=-2$ and $c_2=1$ exists on $X_8$.
We easily see that the minimal resolution of $\cE$ is not of type \eqref{eq:pomembno} on either of the two fourfolds containing $X_8$.

\item $c_1=-1$

As above we have an exact sequence
$$0\longrightarrow \cO_{X_8}\longrightarrow \cE \longrightarrow \cI_C(-1) \longrightarrow 0$$
and thus $h^3(\cE)=h^0(\cE(1))=6$. From GRR we have $-5=\chi(\cE)=-6+\frac{c_2}{2}$, so $c_2=2$.
From the above sequence we compute $h^0(\cI(1))=23-20=3$, so $C$ is a plane conic.

This $C$ has a minimal resolution  
$$0\longrightarrow \cO_Y(-3)\longrightarrow \cO_Y(-2)^3 \longrightarrow \cO_Y(-1)^3 \longrightarrow \cI_C \longrightarrow 0,$$
where $Y$ is a fourfold of degree 2 in $\PP^5$. Theorem \ref{izjave} gives a minimal resolution of $\cE$: 
\begin{equation}\label{X8}
0\longrightarrow \mathcal{O}_Y(-5)\oplus \cO_Y(-3)^3 \longrightarrow \cO_Y \oplus \cO_Y(-2)^3 \longrightarrow \cE \longrightarrow 0.
\end{equation}

\item $c_1=0$

We start with an exact sequence
$$0\longrightarrow \cO_{X_8}\longrightarrow \cE \longrightarrow \cI_C \longrightarrow 0.$$
From GRR we obtain $h^0(\cE(1))=12-c_2$, thus we have $c_2=6-h^0(\cI_C(1))$, which gives four possibilities for $c_2$: 3, 4, 5, 6. 
The only case when $C$ is AG in one of the two fourfolds containing $X_8$ is $c_2=4$. In this case $C$ is a space curve of type (2,2).

A minimal resolution of a curve $C$ is 
$$0\longrightarrow \cO_Y(-4) \longrightarrow \cO_Y(-3)^2 \oplus \cO_Y(-2) \longrightarrow \cO_Y(-1)^2 \oplus \cO_Y(-2) \longrightarrow \cI_C \longrightarrow 0,$$
where $Y$ is a fourfold of degree 2 in $\PP^5$.
From Theorem \ref{izjave} we get 
$$0\longrightarrow \cO_Y(-4)\oplus \cO_Y(-3)^2\oplus \cO_Y(-2) \longrightarrow \cO_Y \oplus \cO_Y(-1)^2\oplus \cO_Y(-2) \longrightarrow \cE \longrightarrow 0.$$

\item $c_1=1$

We have
$$0\longrightarrow \cO_{X_8} \longrightarrow \cE \longrightarrow \cI_C(1) \longrightarrow 0.$$
Because $h^3(\cE)=h^0(\cE(-1))=0$, then $1+h^0(\cI_C(1))=h^0(\cE)=\chi(\cE)$.
From GRR follows $c_2=10-2h^0(\cI_C(1))$. So we have four choices for $c_2$, which are 4,6,8,10. 

If $c_2=4$ the corresponding curve is a plane quartic with a resolution 
$$0\longrightarrow \cO_Y(-3)\longrightarrow \cO_Y(-2)^3 \longrightarrow \cO_Y(-1)^3 \longrightarrow \cI_C \longrightarrow 0,$$ 
where $Y$ is a fourfold of degree 4 in $\PP^5$. Theorem \ref{izjave} yields
$$0\longrightarrow \cO_Y(-1)^4 \longrightarrow \cO_Y^4 \longrightarrow \cE \longrightarrow 0.$$

If $c_2=6$ the corresponding curve is a complete intersection of type (2,3) and has a resolution 
$$0\longrightarrow \cO_Y(-5)\longrightarrow \cO_Y(-4)^2\oplus \cO_Y(-2) \longrightarrow \cO_Y(-1)^2 \oplus \cO_Y(-3) \longrightarrow \cI_C \longrightarrow 0,$$ 
where $Y$ is a fourfold of degree 2 in $\PP^5$ and by Theorem \ref{izjave}
$$0\longrightarrow \cO_Y(-3)^3\oplus \cO_Y(-1) \longrightarrow \cO_Y^3 \oplus \cO_Y(-2) \longrightarrow \cE \longrightarrow 0.$$

If $c_2=8$ the corresponding curve is a complete intersection of type (2,2,2) and has a resolution 
$$0\longrightarrow \cO_Y(-5)\longrightarrow \cO_Y(-4)\oplus \cO_Y(-3)^2 \longrightarrow \cO_Y(-2)^2 \oplus \cO_Y(-1) \longrightarrow \cI_C \longrightarrow 0$$
and as before
$$0\longrightarrow \cO_Y(-3)^2\oplus \cO_Y(-2)^2 \longrightarrow \cO_Y^2 \oplus \cO_Y(-1)^2 \longrightarrow \cE \longrightarrow 0,$$ 
where $Y$ is a fourfold of degree 2 in $\PP^5$.

If $c_2=10$ the corresponding curve is canonical of genus 6. We will show the existence of this bundle in the next section, which will also give us an indecomposable bundle of higher rank.

\item $c_1=2$

As before we have 
$$0\longrightarrow \cO_{X_8} \longrightarrow \cE \longrightarrow \cI_C(2) \longrightarrow 0.$$
Beacause $h^3(\cE)=h^0(\cE(-2))=0$, then $1+h^0(\cI_C(2))=h^0(\cE)=\chi(\cE)$.
With GRR we see $c_2=19-h^0(\cI_C(2))$. So we have $c_2\leq 19$.

If $c_2=16$, then the corresponding ACM curve is a complete intersection of type (2,2,2,2).
A minimal resolution of $\cI_C$ is 
$$0\longrightarrow \cO_Y(-6)\longrightarrow \cO_Y(-4)^3 \longrightarrow \cO_Y(-2)^3 \longrightarrow \cI_C \longrightarrow 0,$$
where $Y$ is a fourfold of degree 2 in $\PP^5$
and Theorem \ref{izjave} gives
$$0\longrightarrow \cO_Y(-2)^4 \longrightarrow \cO_Y^4 \longrightarrow \cE \longrightarrow 0.$$

\item $c_1=3$

As above we have 
$$0\longrightarrow \cO_{X_8} \longrightarrow \cE \longrightarrow \cI_C(3) \longrightarrow 0$$
and thus $h^3(\cE(-1))=h^0(\cE(-2))=0$. GRR implies $c_2=28$.

\item $c_1=4$
As above we have $h^3(\cE(-1))=0$ and therefore $c_2=44$.
\end{itemize}

Using similar methods as in the case of CICY of type (2,4), we will obtain the remaining cases CICY threefolds of types (3,3), (2,2,3) and (2,2,2,2).

\subsection{CICY of type (3,3)}
Let $X_9$ be a general CICY of type (3,3) and $Y$ a fourfold of degree 3 in $\PP^5$.

\begin{itemize}

\item $c_1=-2$

We have $c_2=1$ and the corresponding curve is a line. The existence of a line on $X_9$ was showed in \cite{knu}.

\item $c_1=-1$

We have $c_2=2$ and the corresponding curve is a conic. The existence of a conic on $X_9$ can also be found in \cite{knu}.

\item $c_1=0$

We have $6+h^0(\cI_C(1))=12-c_2$. This gives four possible choices for $c_2$, which are 3,4,5,6. 
The only case when $C$ is AG in $Y$ is $c_2=3$. In this case the corresponding curve is a plane cubic. We have
$$0\longrightarrow \cO_Y(-3)\longrightarrow \cO_Y(-2)^3 \longrightarrow \cO_Y(-1)^3 \longrightarrow \cI_C \longrightarrow 0$$   
and
\begin{equation}\label{X9}
0\longrightarrow \mathcal{O}_Y(-3)\oplus \cO_Y(-2)^3 \longrightarrow \cO_Y \oplus \cO_Y(-1)^3 \longrightarrow \cE \longrightarrow 0.
\end{equation}

\item $c_1=1$

We have $6-\frac{c_2}{2}=1+h^0(I_C(1))$ thus by GRR $c_2=10-2h^0(I(1))$. We get four options for $c_2$: 4,6,8,10.
The only case when $C$ is AG in $Y$ is $c_2=6$.
In this case the corresponding curve is a complete intersection of type (2,3).
Suitable resolutions are
$$0\longrightarrow \cO_Y(-4)\longrightarrow \cO_Y(-3)^2\oplus\cO_Y(-2) \longrightarrow \cO_Y(-1)^2\oplus \cO_Y(-2) \longrightarrow \cI_C \longrightarrow 0,$$ 
$$0\longrightarrow \cO_Y(-2)^3\oplus \cO_Y(-1) \longrightarrow \cO_Y^3\oplus \cO_Y(-1) \longrightarrow \cE \longrightarrow 0.$$

\item $c_1=2$

Because $h^3(\cE)=h^0(\cE(-2))=0$, we have $1+h^0(\cI_C(2))=h^0(\cE)=\chi(\cE)$.
From GRR we get $c_2=20-h^0(\cI_C(2))$ and therefore $c_2\leq 20$.

\item $c_1=3$
We have $h^3(\cE(-1))=h^0(\cE(-2))=0$ and by GRR we obtain $c_2=30$.

\item $c_1=4$
We have $h^3(\cE(-1))=0$ and by GRR we obtain $c_2=48$.

\end{itemize}

\subsection{CICY of type (2,2,3) and (2,2,2,2)}

Using the above methods concludes the classification of indecomposable rank 2 bundles listed in Theorem \ref{theorem1}. 
For $c_1=-2$ and $c_1=-1$, Knutsen \cite{knu} proved the existence of a line and a conic on $X_{12}$ and $X_{16}$.
Resolutions of type \eqref{eq:pomembno} can be writen for bundles with  
$c_1=0$, $c_2=4$ and $c_1=1$, $c_2=8$ on $X_{12}$ and for $c_1=2$, $c_2=16$ on $X_{16}$.

In the next section we will also need the following interesting example, when $X$ is CICY of type (2,2,3) and $\cE$ is a bundle with $c_1=0$ and $c_2=4$.
In this case we have
$$0\longrightarrow \cO_Y(-3) \to \cO_Y(-2)^3 \longrightarrow \cO_Y(-1)^3 \longrightarrow \cI_C \longrightarrow 0$$
and
\begin{equation}\label{X12}
0\longrightarrow \cO_Y(-3)\oplus \cO_Y(-2)^3 \longrightarrow \cO_Y\oplus \cO_Y(-1)^3 \longrightarrow \cE \longrightarrow 0,
\end{equation}
where $Y$ is a complete intersection of type (2,2) in $\PP^6$.

\begin{remark}
A smooth elliptic curve $C$ of degree 5 exists on $X_{12}$ by \cite{knu}. $C$ corresponds to an indecomposable rank 2 ACM bundle $\cE$ with $c_1=0$, $c_2=5$ ($C$ is a locally complete intersection since it is smooth and it is subcanonical since it has trivial canonical sheaf). $C$ is non-degenerate in $\PP^4$ and has five quadrics for minimal generators (see \eqref{g1d5}). Its minimal resolution in a fourfold $Y$ of type (2,2) in $\PP^6$ is 
$$0 \to \cO_Y(-4)\to \cO_Y(-2)^3\oplus \cO_Y(-3)^2\to \cO_Y(-2)^3\oplus \cO_Y(-1)^2\to \cI_C\to 0.$$
We can easily see that $\cE$ is not of type \eqref{eq:pomembno}, despite $C$ being AG in a fourfold of type (2,2) in $\PP^6$. Thus the assumption $c_1(\cE)=c-d_i$ in Theorem \ref{izjave} is neccesary (in this case $c_1(\cE)=0$, $c=4$, $d_i=3$). 

A Similar result holds for a bundle corresponding to a smooth elliptic curve $C$ of degree 6 on $X_{16}$ (by \cite{fis} $C$ has nine quadrics for minimal generators). 
\end{remark}

\section{Proof of Theorem \ref{theorem1}}

In the previous section we proved the part of the Theorem \ref{theorem1} regarding classification. We found the bundles with $c_1=-2$ and $c_1=-1$. Bundles with $c_1=0$ correspond to elliptic curves.
Knutsen \cite{knu} showed the  existence of smooth elliptic curves on all $X_r$ of degree $d\geq 3$, except for $d=3$ on $X_{16}$. By Eisenbud \cite{eis} smooth elliptic curves are ACM. Since they are smooth and subcanonical, the Serre correspondence gives the existence of ACM bundles with $c_1=0$ for all $c_2$ listed in Theorem \ref{theorem1}.

Bundles with $c_1=1$ correspond to canonical curves.
Knutsen showed the existence of smooth curves of degree 10 and genus 6 on $X_8$ and $X_9$ and smooth curves of degree 12 and genus 7 on $X_{12}$. These curves are canonical and thus by the Noether-Enriques-Petri theorem they are ACM. Furthermore, following Noether \cite{noe}, Schreyer \cite{sch} wrote the minimal resolutions of these curves in $\PP^{g-1}$
$$0 \to \cP_{g-2}\to \cdots \to \cP_1\to \cI_C\to 0,$$
where $g$ is the genus of $C$.  
It follows that the canonical curves are AG, thus they are subcanonical. The Serre corespondence then implies the existence of ACM bundles with 
$c_1=1$, $c_2=12$ on $X_{12}$ and $c_1=1$, $c_2=10$ on $X_8$ and $X_9$.

In the sequel we will prove the existence of the remaining bundles from Theorem \ref{theorem1}.

In order to obtain the last three cases of bundles with $c_1=1$ we restrict the exact sequence \eqref{eq:pomembno} to $X$ to obtain 
\begin{equation}\label{koneckonec}
0 \to \cE(-d)\to \cL_0^{\vee}(c-2d)\otimes \cO_X\to \cL_0\otimes \cO_X \to \cE \to 0.
\end{equation}

First we prove the existence of a bundle with $c_1=1$ and $c_2=6$ on $X_8$. Recall the minimal resolution \eqref{X8} of a bundle with $c_1=-1$ and $c_2=2$ on $X_8$. As in \eqref{koneckonec} we get an exact sequence 
$$0\longrightarrow \cE(-4) \longrightarrow \cO_{X_8}(-5)\oplus \cO_{X_8}(-3)^3 \longrightarrow \cO_{X_8} \oplus \cO_{X_8}(-2)^3 \longrightarrow \cE \longrightarrow 0,$$
and from this we get a short exact sequence
\begin{equation}\label{zadnje} 
0\longrightarrow \cE(-4) \longrightarrow \cO_{X_8}(-5)\oplus \cO_{X_8}(-3)^3 \longrightarrow \cF(-3) \longrightarrow 0,
\end{equation}
where $\cF$ is a rank 2 bundle on $X_8$ by the Auslander-Buchsbaum formula. From \eqref{zadnje} also follows that $\cF$ is indecomposable and normalized with $c_1(\cF)=1$ and $c_2(\cF)=6$. The bundle $\cF$ exists on $X_8$ since $\cE$ exists.

Next we prove the existence of a bundle with $c_1=1$ and $c_2=6$ on $X_9$. Consider the minimal resolution \eqref{X9} of a bundle with $c_1=0$ and $c_2=3$ on $X_9$. As above we get a short exact sequence
$$0\longrightarrow \cE(-3) \longrightarrow \mathcal{O}_{X_9}(-3)\oplus \cO_{X_9}(-2)^3 \longrightarrow \cF(-2) \longrightarrow 0,$$
where $\cF$ is a normalized indecomposable rank 2 bundle with $c_1=1$ and $c_2=6$ on $X_9$.

Finally the existence of a bundle with $c_1=1$ and $c_2=8$ on $X_{12}$ will be obtained from the minimal resolution \eqref{X12} of a bundle with $c_1=0$ and $c_2=4$ on $X_{12}$. Again, we get a short exact sequence
$$0\longrightarrow \cE(-3) \longrightarrow \mathcal{O}_{X_{12}}(-3)\oplus \cO_{X_{12}}(-2)^3 \longrightarrow \cF(-2) \longrightarrow 0,$$
with $\cF$ a normalized indecomposable rank 2 bundle with $c_1=1$ and $c_2=8$ on $X_{12}$. 
This finishes the proof of Theorem \ref{theorem1}.

We conclude the paper by proving the existence of an indecomposable bundle of higher rank.

Canonical curves which correspond to bundles with $c_1=1$, $c_2=10$ on $X_8$ have six minimal quadric generators (see e.g. \cite{bab}) and thus the minimal resolution of the ideal sheaf $\cI_C$ is
$$0\to \cO_Y(-5)\longrightarrow \cO_Y(-3)^5 \longrightarrow  \cO_Y(-2)^5 \longrightarrow \cI_C \longrightarrow 0,$$
where $Y$ is fourfold of degree 4 in $\PP^5$.
Theorem \ref{izjave} gives a minimal resolution of a bundle $\cE$ with $c_1=1$ and $c_2=10$: 
$$0\to \cO_Y(-3)\oplus \cO_Y(-2)^5 \longrightarrow  \cO_Y\oplus \cO_Y(-1)^5 \longrightarrow \cE \longrightarrow 0.$$
As above, we construct a short exact sequence 
$$0\to \cE(-4) \longrightarrow  \cO_{X_8}(-3)\oplus \cO_{X_8}(-2)^5 \longrightarrow \cF(-2) \longrightarrow 0.$$
It can be easily seen that $\cF$ is either an indecomposable rank 4 vector bundle on $X_8$ or a direct sum of a line bundle and an indecomposable rank 3 bundle.
\begin{center}
ACKNOLEDGMENT
\end{center}
I would like to thank to A. Buckley for detailed study of the material and giving many valuable comments. I am also grateful to D. Faenzi, T. Koďż˝ir, N. Mohan Kumar and G. Ravindra for many useful explanations.


\begin{thebibliography}{99}


\bibitem{bab}
{D.W. Babbage, \emph{A note on the quadrics through a canonical curve},
J. London Math Soc. 14 (1939) 310-315.}

\bibitem{bea}
{A. Beauville \emph{Determinantal hypersurfaces}, Dedicated to William Fulton
on the occasion of his 60th birthday. Michigan Math. J. 48 (2000), 39ďż˝64.}

\bibitem{bis}
{I. Biswas, J.Biswas, G.V.Ravindra, \emph{On some moduli spaces of stable vector bundles on cubic and quartic threefolds},J. Pure Appl. Alg. 212(2008), 2298-2306.}


\bibitem{buceis}
{D. A. Buchsbaum and D. Eisenbud, \emph{Algebra structures for finite free resolutions, and some structure theorems for ideals of codimension 3}, Amer. J. Math. 99 (1977), no. 3, 447ďż˝485.}



\bibitem{chifae}
{L. Chiantini, D. Faenzi \emph{On general surfaces defined by an almost linear pfaffian}, Geom. Ded. 142 (2009), 91-107.}


\bibitem{chi}
{L. Chiantini, C. Madonna, \emph{ACM bundles on a general quintic threefold},
Dedicated to Silvio Greco on the occasion of his 60th birthday (Catania, 2001).
Matematiche (Catania) 55 (2000), no. 2, 239ďż˝258 (2002).}

\bibitem{eis}
{David Eisenbud, The geometry of syzygies, Graduate Texts in Mathematics, vol. 229, Springer-Verlag, New York, 2005.}

\bibitem{fis}
{T. A. Fisher, \emph{The invariants of a genus one curve}, Proc. Lond. Math. Soc. (3)
97 (2008) 753-782.}

\bibitem{ful}
{W. Fulton, \emph{Intersection Theory}, Second Edition, Ergebnisse der Mathematik und ihrer Gren-
zgebiete, Springer-Verlag, Berlin, 1998.}


\bibitem{har}
{R. Hartshorne, \emph{Algebraic geometry}, Graduate Texts in Mathematics 52, Springer-Verlag, New York-Heidelberg, 1977.}

\bibitem{har3}
{R. Hartshorne, \emph{Stable vector bundles of rank 2 on $\PP^3$} Math. Ann., 235 (1978), 229-280.}

\bibitem{her}
{J. Herzog, N.V. Trung, G. Valla \emph{On hyperplane sections of reduced irreducible varieties of low codimension}, J. Math. Kyoto Univ. 34, 47-72 (1994).}

\bibitem{kle}
{H. P. Kley \emph{Rigid Curves in Complete Intersection Calabiďż˝Yau Threefolds}, Compositio Math. 123 (2000), no. 2, 185ďż˝208.}

\bibitem{knu}
{A. L. Knutsen, \emph{On isolated smooth curves of low genera in Calabi-Yau complete intersection threefolds}, Trans. Amer. Math. Soc. 364, 5243-5264 (2012).}



\bibitem{mad1}
{C. Madonna \emph{ACM vector bundles on prime Fano threefolds and complete intersection Calabi-Yau threefolds}, Rev. Roumaine Math. Pures Appl.
47 (2002), no.2, 211-222.}


\bibitem{mad2}
{C. Madonna \emph{A splitting criterion for rank 2 vector bundles on hypersurfaces in $\\P^4$}, Rend. Sem. Mat. Univ. Pol. Torino 56 (1998), no.2, 43ďż˝54.}

\bibitem{mad3}
{C. Madonna \emph{Rank 4 vector bundles on the quintic threefold, CEJM 3(2005)}, no.3, 404ďż˝411.}

\bibitem{mig}
{J. C. Migliore, \emph{Introduction to liaison theory and deficiency modules}, Progress in Mathematics, vol. 165, Birkhďż˝user Boston Inc., Boston, MA, 1998.}

\bibitem{noe}
{M. Noether, \emph{Ueber die invariante Darstellung algebraischer Functionen}, Math. Ann. 17 (1880), no. 2, 263ďż˝284.}

\bibitem{rao}
{N. Mohan Kumar, A. P. Rao, \emph{ACM bundles, quintic threefolds and counting problems}, Central European Journal of Mathematics 10(4), 1380-1392 (2012).}

\bibitem{rao2}
{N. Mohan Kumar, A. P. Rao, G.V. Ravindra, \emph{Arithmetically Cohen-Macaulay bundles on hypersurfaces}, Comment. Math. Helv. 82(4), 829-843 (2007).}


\bibitem{sch}
{F. O. Schreyer, \emph{A standard basis approach to syzygies of canonical curves}, J. Reine Angew. Math. 421 (1991), 83ďż˝123.}

\bibitem{yu}
{X. Yu, \emph {On smooth and isolated curves in general complete intersection Calabi-Yau threefolds}, preprint available at http://arxiv.org/abs/1208.6282.}


\end{thebibliography}
\end{document}